\newcommand{\labell}[1] {\label{#1}}
\newlength{\facewd} \newlength{\faceht}%
\newcommand{\ba} {{\bf a}}
\newcommand{\bm} {{\bf m}}
\newcommand{\bW} {{\bf W}}
\newcommand{\intt}{{\rm int\,}}
\newcommand{\Orb}{{\rm Orb\,}}
\newcommand{\less}{{\smallsetminus}}
\newcommand{\al}{{\alpha}}
\newcommand{\la}{{\lambda}}
\newcommand{\Cc}{{\mathcal C}}
\newcommand{\Ff}{{\mathcal F}}
\newcommand{\Ee}{{\mathcal E}}
\newcommand{\Ss}{{\mathcal S}}
\newcommand{\ov}{\overline}
\newcommand{\N}{{\mathbb N}}
\newcommand{\PP}{{\mathbb P}}
\newcommand{\R}{{\mathbb R}}
\newcommand{\C}{{\mathbb C}}
\newcommand{\Z}{{\mathbb Z}}
\newcommand{\Nn}{{\mathcal N}}
\newcommand{\Dd}{{\mathcal D}}
\newcommand{\SSS}{{\smallskip}}
\newcommand{\QED}{{\hfill $\Box$\MS}}
\newcommand{\se} {{\stackrel{s}\hookrightarrow}}
\newtheorem{theorem}{Theorem}[section]
\newtheorem{thm}[theorem]{Theorem}
\newtheorem{cor}[theorem]{Corollary}
\newtheorem{lemma}[theorem]{Lemma}
\newtheorem{prop}[theorem]{Proposition}
\newtheorem{example}[theorem]{Example}
\newtheorem{rmk}[theorem]{Remark}
\numberwithin{figure}{section}
\numberwithin{equation}{section}
\newcommand{\MS}{{\medskip}}
\newcommand{\NI}{{\noindent}}
\begin{document}

\title{The Hofer conjecture on embedding symplectic ellipsoids}
\author{Dusa McDuff} \thanks{partially supported by NSF grant DMS 0905191.}
\address{(D.~McDuff)
Department of Mathematics, 
Barnard College, Columbia University, New York, 
NY 10027-6598, USA.}
\email{dmcduff@barnard.edu}
\keywords{symplectic embeddings, embedded contact homology, symplectic capacities, symplectic ellipsoids}
\subjclass[2000]{53D05}
\date{August 8, 2010, revised November 30, 2010}

\begin{abstract} 
In this note we  show that one open $4$-dimensional ellipsoid embeds symplectically into another if and only the ECH capacities of
 the first are no larger than those of the second.  This proves a conjecture due to Hofer.
 The argument uses the equivalence of the ellipsoidal embedding 
 problem with a ball embedding problem that was recently established by McDuff.  Its method is inspired by Hutchings' recent results  on embedded contact homology (ECH) capacities but does not use them. 
\end{abstract}

\maketitle
\section{Introduction}

Consider the ellipsoid $E(a,b): = \{z\in \C^2: \frac{|z_1|^2}a + \frac{|z_2|^2}b\le 1\}$ with the symplectic structure induced from the standard structure on Euclidean space.  Define $\Nn(a,b)$ to be the sequence of numbers  formed by arranging all the positive integer combinations $ma+nb, m,n\ge 0,$ in nondecreasing order (with repetitions). 
We say that  $\Nn(a,b)$ is less than or equal to $\Nn(c,d)$ (written $\Nn(a,b)  \preccurlyeq \Nn(c,d)$) if, for all $k\ge 0$,  the $k$th entry of $\Nn(a,b)$ is at most equal to the $k$th entry in $\Nn(c,d)$.
Hofer's conjecture  evolved as 
earlier guesses, such as those by Cieliebak, Hofer, Latschev and Schlenk in \cite{CHLS}, proved inadequate.  Finally, in private conversation, he
conjectured   that the numbers 
$\Nn(a,b)$ should detect precisely when these embeddings exist. 

We show in this note that this is indeed the case.

\begin{thm}\labell{thm:hof}  There is a symplectic embedding $\intt E(a,b)\;\se \; E(c,d)$ exactly if
$$
\Nn(a,b)  \preccurlyeq \Nn(c,d).
$$
\end{thm}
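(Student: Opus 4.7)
The plan is to prove the two directions separately. The ``only if'' direction is standard: Hutchings' computation shows that the ECH capacity sequence of $E(a,b)$ is exactly $\Nn(a,b)$, and monotonicity of ECH capacities under symplectic embedding immediately gives $\Nn(a,b)\preccurlyeq\Nn(c,d)$. The substance of the theorem therefore lies entirely in the ``if'' direction.

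For the ``if'' direction, my first move is to reduce to the rational case by a standard approximation plus openness argument. With $a/b,\,c/d\in\Q$, one invokes McDuff's reduction: the weighted continued fraction expansion of $a/b$ produces a finite weight sequence $\bW(a,b)=(w_1,\ldots,w_M)$ such that, up to a set of measure zero, $\intt E(a,b)$ is symplectomorphic to the disjoint union of open balls $\coprod_i \oB(w_i)$, and similarly for $(c,d)$. The ellipsoidal embedding question thereby becomes a symplectic ball packing problem into a suitable blowup of $\CP^2$. By the classical theory of ball packings (Biran, McDuff--Polterovich, and their successors), such a packing exists if and only if a certain cohomology class is nonnegative on every reduced exceptional class $E = dL-\sum m_i E_i$ of a multiple blowup of $\CP^2$, equivalently, if and only if a prescribed linear inequality holds between the weight sequences $\bW(a,b)$ and $\bW(c,d)$ for each such exceptional class.

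What remains is purely combinatorial: given $\Nn(a,b)\preccurlyeq\Nn(c,d)$, verify the inequality contributed by every reduced exceptional class. This is where the main obstacle lies. The key algebraic inputs are the adjunction-type identities $\sum m_i = 3d-1$ and $\sum m_i^2 = d^2+1$ that any reduced exceptional class must satisfy. My plan is to attach to each such class $E$ an integer index $k=k(E)$ --- selected by analogy with Hutchings' ECH index, although the definition itself is purely combinatorial --- and then to show, by counting lattice points in $\{(m,n)\in\Z_{\ge 0}^2 : ma+nb\le\la\}$ and comparing the count with the multiplicities $m_i$, that if the exceptional-class inequality failed, then the $k$th entry of $\Nn(a,b)$ would exceed the $k$th entry of $\Nn(c,d)$, contradicting the hypothesis. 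Turning the Diophantine content of reduced exceptional classes into a comparison of the sequences $\Nn$ is the technical crux of the argument, and I anticipate that this translation, rather than the symplectic reductions above, is what will require real care.
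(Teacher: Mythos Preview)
Your outline has the right skeleton---reduce to rational parameters, invoke McDuff's weight-sequence reduction, then finish with combinatorics---but the reduction step is mis-stated, and this is where the real gap lies. Decomposing the \emph{target} ellipsoid $E(c,d)$ into balls via its weight sequence (``similarly for $(c,d)$'') does not yield a tractable packing problem: a disjoint union of balls is not a useful target, and it is unclear what ``suitable blowup of $\CP^2$'' you then land in. The reduction actually used (Proposition~\ref{prop:M}) goes the other way: one fills the \emph{complement} of $E(c,d)$ inside the ball $B(d)$ by balls of sizes $\bW(d-c,d)$, so that $\intt\la E(e,f)\se E(c,d)$ becomes equivalent to packing $\intt\la B\bigl(\bW(e,f)\bigr)\sqcup\intt B\bigl(\bW(d-c,d)\bigr)$ into the single ball $B(d)$. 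Only after this step does the classical ball-packing criterion in $\CP^2\#M\ov{\CP}\,\!^2$ apply.

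Once the target is a ball, the exceptional-class inequalities do translate (Proposition~\ref{prop:H} and Lemma~\ref{le:alg}) into the single condition
\[
\la^2\Nn(e,f)\,\#\,\Nn(d-c,d)\;\preccurlyeq\;\Nn(d,d),
\]
using the identity $\Nn\bigl(\bW(p,q)\bigr)=\Nn(p,q)$ of Corollary~\ref{cor:1}. The remaining combinatorics is then not to attach an index $k(E)$ to each exceptional class and compare $\Nn(a,b)$ directly with $\Nn(c,d)$, but rather to show that the $\#$-condition above is equivalent to $\la^2\Nn(e,f)\preccurlyeq\Nn(c,d)$; that equivalence is exactly Lemma~\ref{le:hof2} together with Corollary~\ref{cor:2}. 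Your proposed direct route would in effect have to re-derive these identities inside an unspecified blowup, and as written there is no mechanism by which the lattice-point count attached to the \emph{target} $E(c,d)$ ever enters the argument.
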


Using embedded contact homology (ECH), 
Hutchings showed in \cite{H} that the condition  $\Nn(a,b)  \preccurlyeq \Nn(c,d)$ 
is necessary.  
In fact, the main result of his paper is that there are quantities 
called ECH capacities, 
defined for any closed bounded subset of $\R^4$, 
that are monotone under symplectic embeddings.  The application to embedding ellipsoids
then follows because  
 the ECH capacities of $E(a,b)$ are just the sequence $\Nn(a,b)$. 
  Hutchings also shows that his ECH capacities give sharp obstructions 
  to the problem of embedding a union of disjoint balls into a ball. 
  
In the case of embedding an ellipsoid into a ball,
McDuff--Schlenk \cite[Thm.~1.1.3]{MS}  calculated exactly when 
the embedding exists, and concluded that in this case 
the condition  $\Nn(a,b)  \preccurlyeq \Nn(c,d)$ 
is sufficient.   Combining these results, we see
that Hofer's conjecture holds when the target is a ball.   
Below we prove the result in general by a much shorter argument
that uses none of the geometric results in ECH. 
Instead it uses some elementary combinatorics  that develop some of 
Hutchings'  ideas,
as well as the result from McDuff \cite{M} that reduces the 
ellipsoidal embedding problem to a ball embedding problem.
See Hutchings \cite{Hs} for a survey that gives more of the background.

The higher dimensional analog of Theorem \ref{thm:hof}  is completely open; there is even no good guess of what the answer should be. However the analog of the Hofer conjecture does not hold.  The first counterexamples are due to Guth \cite{Gu}  who showed that there are constants $a,b,c$ such that   $E(1,R,R)$ embeds symplectically in $ E(a,b,cR^2)$ for all $R>0$, with similar results in higher dimensions.   In \cite{HK} Hind--Kerman improved Guth's embedding method to
show that $E(1,R,\dots, R)$ embeds in $E(a,a,R^2,\dots,R^2)$
whenever $a> 3$, but found an obstruction when $a< 3$.

\begin{example}\labell{ex:1}\rm (i)
The sequence $\Nn(a,a)$ is
$$
\Nn(a,a) = (0,a,a,2a,2a,2a,3a,3a,3a,3a,\dots).
$$
In other words, for each $d$ there are $d+1$ entries of $da$ 
occurring as the terms $\Nn_k(a,a)$ for $\frac 12(d^2 + d) \le k \le \frac 12(d^2 + 3d)$.  Thus $\Nn(a,a)$ is the maximal sequence with $\Nn_k = da$ for $k = \frac 12(d^2 + 3d)$.
\MS

\NI (ii) When $k = \frac 12(d^2+3d)$, the sequence
$$
 \Nn(1,4) = (0,1,2,3,4,4,5,5,6,6,7,7,8,8,8,\dots)
$$
has $\Nn_k(1,4) = 2d = \Nn_k(2,2)$. Hence the maximality of
$\Nn(2,2)$ implies that
 $\Nn(1,4)\preccurlyeq \Nn(2,2)$.  Thus $\intt E(1,4)\se B(2)$. (Here, and elsewhere, we write $X\se Y$ to mean that $X$ embeds symplectically in $Y$.)  
 
 The first construction of  an embedding of this kind is due to Opshtein \cite{Op}.\footnote{
 In fact he constructed an explicit  embedding from $\intt E(1,4)$ into projective space by using properties of neighborhoods of curves of degree $2$, but one can easily arrange that the embedding  avoids a line so that  there is a corresponding embedding into a ball. Cf. also Theorem 4 in \cite{Op2}.}  
 The paper \cite{M} develops a general  method of embedding ellipsoids, which in most cases is not very geometric.  However, 
 as is shown in \cite[\S1]{M}, in the special case of $E(1,4)$ the argument can be made  rather explicit.  One still cannot see the geometry of the  image as clearly as in Opshtein because one uses symplectic inflation to increase the size of the image of the ellipsoid, i.e. rather than embedding larger and larger ellipsoids into a fixed ball, one embeds a small ellipsoid  $E$ into the ball $B$ and then increases the relative size  of $E$ by distorting  the symplectic form on $B\less E$.
\end{example}

\begin{rmk}\rm  (i)
We phrase all our  results in terms of  embedding the interior of $X$ into $Y$ (or, equivalently, embedding $\intt X$ into $\intt Y$), while Hutchings talks about embedding $X$ into the interior of $Y$. But these amount to the same  when $X$ is the disjoint union of ellipsoids, since in this case there is a symplectic embedding $\intt X \se Y$ exactly if  $\la X$ embeds symplectically into $\intt Y$ for all $\la<1$; cf. \cite[Cor.~1.5]{M}.\MS

\NI (ii) The current 
methods extend to give a simple numerical criterion for embedding  disjoint unions of ellipsoids  into an ellipsoid. See Proposition \ref{prop:many} for a precise result.
Also, all the methods used here extend to the case when the target manifold is a polydisc,
i.e. a product of
two discs with a product form, cf. M\"uller \cite{DoM}, or a  blowup of a rational or ruled surface.
 However, just as in \cite{Mdef}, one gets no information when the target is a closed $4$-manifold with $b_2^+>1$ such as $T^4$ or a surface of general type.
\end{rmk}

See Bauer \cite{Bau} for more information on the numerical properties  of the sequences $\Nn(a,b)$.
\MS

\NI {\bf Acknowledgements}  I warmly thank Michael Hutchings for his patient explanations of the index calculations in ECH and for many illuminating discussions, Felix Schlenk for some useful comments on an earlier version of this note, and also FIM at ETH, Z\"urich, for 
providing a very stimulating environment at Edifest, 2010.

\section{Combinatorics}

A standard continuity argument implies that it suffices to prove
Theorem \ref{thm:hof} when the ratios $b/a$ and $d/c$ are rational.
One of the
main results of \cite{M} states 
that for each integral ellipsoid $E(a,b)$ 
there is a sequence of integers $\bW(a,b): = (W_1,\dots,W_M)$ called the (normalized) {\bf weight sequence} of $a,b$, such that $\intt E(a,b)$ embeds in  $B(\mu)$ exactly if the disjoint union $\sqcup \,\intt B(\bW): = \sqcup \,\intt B(W_i)$ embeds in $B(\mu)$.  This section  shows  how $\Nn(a,b)$ may be calculated in terms of $\bW(a,b)$.

We begin with some definitions.  They are basically taken from \cite{M}, 
but are modified as in \cite{MS}. 
Given positive  integers $p,q$ with $q\le p$ we denote by $\bW(p,q) = \bW(q,p)$
the normalized weight sequence of $p/q$.   
Thus $\bW(p,q) = 
 (W_1,W_2,\dots,W_M)$  is a finite sequence of positive integers  defined recursively by the following rules:\footnote
 {
The proof that the weight expansion $\bW(p,q)$ described above agrees with that used in \cite{M} is given in the appendix to \cite{MS}. }
\MS

$\bullet$   
$W_1 = q$ and $ W_n \ge W_{n+1}>0$ for all $n$;

\SSS 
$\bullet$  
if $ W_i>W_{i+1} = \dots = W_{n}$ (where we set $W_0 := p$), then
$$
W_{n+1} = \min \bigl\{W_n, \; W_i - (n-i) W_{i+1} \bigr\};
$$

$\bullet$ 
the sequence stops at $W_M$ if the above formula gives $W_{M+1}=0$.\MS

\begin{figure}[htbp] 
   \centering
   \includegraphics[width=2in]{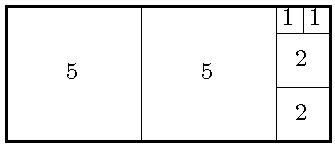} 
   \caption{One obtains the weights by cutting  a rectangle into squares:
   $W(5,12) = (5,5,2,2,1,1)=:(5^{\times 2},2^{\times 2},1^{\times 2})$.}
   \label{fig:3}
\end{figure}
\NI

It is often convenient to write $\bW(p,q)$ as 
\begin{equation}\labell{eq:W}
\bW(p,q)= \bigl(X_0^{\times\ell_0}, X_1^{\times \ell_1},\dots, X_K^{\times \ell_K}\bigr),
\end{equation}
where $X_0> X_1>\dots >X_K>0$ and $\ell_K\ge 2$.  Thus the $\ell_i$ are the multiplicities of
the entries in $\bW(p,q)$ and, as is well known, give the continued fraction expansion of $
p/q$: namely
\begin{equation}\labell{eq:ell}
\frac pq = \ell_0 + \tfrac 1{\ell_1 + \tfrac 1{\ell_2 + \dots \tfrac 1{\ell_K}}} =: [\ell_0;\ell_1,\dots,\ell_K].
\end{equation}
In this notation, the defining formulas for the terms in $\bW(p,q)$ become:
$$
X_{-1}: = p, \;\;X_0 = q, \;\;
X_{i+1} = X_{i-1}-\ell_i X_i,\; i\ge 0.
$$
In particular, because $X_1 = p-\ell_0q$,
\begin{equation}\labell{eq:Wpq}
W(p,q) = \bigl(q^{\times\ell_0}, X_1^{\times \ell_1}, X_2^{\times \ell_2},\dots\bigr) = 
 \bigl(q^{\times\ell_0}, W(q,X_1)\bigr).
\end{equation}
 More generally,  the following holds.
 
 \begin{lemma}\labell{le:W} Define $p,q, X_i, \ell_i, 0\le i\le K$ as in equations \eqref{eq:W} and \eqref{eq:ell} and set $X_{-1}: = p, \ell_{-1}: = 1$.  Then for all $i=0,\dots,K-1$, we have
 $$
  \bW(X_{i-1}, X_i) = \bigl(X_i^{\times \ell_i}, \bW(X_i, X_{i+1})\bigr).
  $$
  \end{lemma}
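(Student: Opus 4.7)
The plan is to recognize Lemma \ref{le:W} as equation \eqref{eq:Wpq} applied after a shift of indices.

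First, I would observe that the argument leading to \eqref{eq:Wpq} uses only the hypothesis $p > q > 0$ and the recursion defining $\bW$. The same reasoning therefore yields a general rule: for \emph{any} positive integers $P > Q$ with Euclidean division $P = mQ + R$ (so $m \ge 1$ and $0 \le R < Q$), one has
$$
\bW(P, Q) = \bigl(Q^{\times m}, \bW(Q, R)\bigr),
$$
with $\bW(Q, 0)$ understood as the empty sequence. Concretely, starting from $W_0 = P$, $W_1 = Q$, the recursion $W_{n+1} = \min\{W_n, W_0 - n W_1\}$ (the last strict drop being at $i = 0$) forces $W_j = Q$ exactly as long as $P - (j-1)Q \ge Q$, i.e.\ for $1 \le j \le m$, and then produces $W_{m+1} = \min\{Q, R\} = R$. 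From that point on the recursion depends only on the pair $(W_m, W_{m+1}) = (Q, R)$, so the tail of $\bW(P, Q)$ reproduces $\bW(Q, R)$ by definition.

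Next, I would apply this general statement to $(P, Q) = (X_{i-1}, X_i)$. The defining formula $X_{i+1} = X_{i-1} - \ell_i X_i$, combined with the inequalities $0 \le X_{i+1} < X_i < X_{i-1}$ and $\ell_i \ge 1$, says exactly that $\ell_i$ and $X_{i+1}$ are the quotient and remainder of $X_{i-1}$ divided by $X_i$. Substituting $(m, R) = (\ell_i, X_{i+1})$ in the displayed identity yields the lemma at once.

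The only point requiring verification is the strict descent $X_{-1} > X_0 > X_1 > \dots > X_K > 0$. The inequalities $X_0 > \dots > X_K > 0$ are built into the representation \eqref{eq:W}; the inequality $X_{-1} = p > q = X_0$ is the implicit running assumption (the degenerate case $p = q$ being excluded by the requirement $\ell_K \ge 2$); and the condition $\ell_K \ge 2$ is precisely what makes $X_{K+1} = X_{K-1} - \ell_K X_K = 0$, consistent with the stopping rule at $W_M = X_K$. I do not foresee any real obstacle: the lemma is essentially a rereading of \eqref{eq:Wpq} for the shifted pair $(X_{i-1}, X_i)$, and all the work is already contained in the proof of \eqref{eq:Wpq}.
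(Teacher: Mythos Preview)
Your proposal is correct and is exactly the argument the paper has in mind: the lemma is stated there without proof, immediately after equation~\eqref{eq:Wpq}, with the words ``More generally, the following holds,'' signalling that it is simply \eqref{eq:Wpq} reapplied to the pair $(X_{i-1},X_i)$. Your verification of the Euclidean-division step and of the descent $X_{-1}>X_0>\dots>X_K>0$ fills in precisely the details the paper leaves implicit.
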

  
  \begin{rmk}\rm  
  The weight sequence $\bW(p,q)$ does not seem to be mentioned in elementary treatments of continued fractions. Instead, one considers
  the convergents $p_k/q_k: = [\ell_0;\ell_1,\dots,\ell_k]$ of $p/q$. However, 
it is well known that the two mirror  fractions 
  $p/q: = [\ell_0;\ell_1,\dots,\ell_K]$ and $P/Q: = [\ell_K;\ell_{K-1},\dots,\ell_0]$
  have the same numerator $p=P$. 
  It follows that when $p,q$ are relatively prime the sequence
  $$
  (X_0,X_1,\dots,X_K),
  $$
  when taken in reverse order, is just
  the sequence of numerators of the convergents  of the mirror $P/Q$.
  More precisely, $X_{K-k} =P_{k-1}$ for $1\le k \le K+1$.  Further,  the rectangle definition of the $W_i$ 
  (cf. Figure \ref{fig:3}) easily implies
  that $\sum_i W_i^2 = pq$.  Another, less obvious, quadratic relation for the $W_i$ is discussed in \cite[\S2.2]{MS}.
  \end{rmk}
  
 We now show that $\Nn(a,b)$ may be calculated 
 from the weights $\bW(a,b)$ using an operation $\#$ first considered by Hutchings.
 Given two nondecreasing 
 sequences $\Cc=(\Cc_k)_{k\ge 0}, \Dd=(\Dd_k)_{k\ge 0}$ with $\Cc_0=\Dd_0 = 0$,
 define $\Cc\# \Dd$ by
 $$
 (\Cc\#\Dd)_k: = \max_{0\le i\le k} (\Cc_i + \Dd_{k-i}).
 $$
 
 The next result follows immediately from the definition.
 
 \begin{lemma}\labell{le:assoc}  \begin{itemize}\item[(i)] The operation $\Cc,\Dd\mapsto \Cc\#\Dd$ is  associative and commutative.  
 \item[(ii)] If $\Cc\preccurlyeq \Cc'$ then $\Cc\#\Dd \preccurlyeq \Cc'\#\Dd$  for all sequences $\Dd$.
 \end{itemize}
 \end{lemma}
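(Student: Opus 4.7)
The plan is to reduce both parts to very short computations with maxima, exactly as the author indicates (``follows immediately from the definition'').

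For the commutativity in (i), the idea is to rewrite $(\Cc\#\Dd)_k=\max_{0\le i\le k}(\Cc_i+\Dd_{k-i})$ by the index substitution $j=k-i$; since addition is commutative and the index set $\{0,\dots,k\}$ is symmetric under $i\mapsto k-i$, this immediately gives $(\Dd\#\Cc)_k$. For associativity, I would expand both $((\Cc\#\Dd)\#\Ee)_k$ and $(\Cc\#(\Dd\#\Ee))_k$ by one level of the definition. The first becomes
\[
\max_{0\le i\le k}\Bigl(\max_{0\le j\le i}(\Cc_j+\Dd_{i-j})+\Ee_{k-i}\Bigr)
=\max_{0\le j\le i\le k}\bigl(\Cc_j+\Dd_{i-j}+\Ee_{k-i}\bigr),
\]
and the second similarly equals
\[
\max_{0\le i\le k}\Bigl(\Cc_i+\max_{0\le j\le k-i}(\Dd_j+\Ee_{k-i-j})\Bigr)
=\max_{0\le i\le k,\,0\le j\le k-i}\bigl(\Cc_i+\Dd_j+\Ee_{k-i-j}\bigr).
\]
After reindexing both as a maximum of $\Cc_a+\Dd_b+\Ee_c$ taken over all triples $(a,b,c)$ of nonnegative integers with $a+b+c=k$, the two expressions visibly agree. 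The only point to be careful about is that the inner $\max$ in the first line can indeed be pulled outside the outer $\max$ (this uses only that adding the constant $\Ee_{k-i}$ commutes with $\max$).

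For (ii), I would observe that the hypothesis $\Cc\preccurlyeq\Cc'$ says $\Cc_i\le\Cc'_i$ for every $i\ge 0$, so $\Cc_i+\Dd_{k-i}\le\Cc'_i+\Dd_{k-i}$ for each $0\le i\le k$; taking the maximum over $i$ on both sides gives $(\Cc\#\Dd)_k\le(\Cc'\#\Dd)_k$, which is the required inequality.

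No genuine obstacle is expected here; the lemma is a bookkeeping statement whose content is exactly that ``max-plus convolution'' of nondecreasing sequences behaves like ordinary convolution, with the monotonicity in each argument being transparent. The only reason to write it out at all is to fix notation for the later combinatorial arguments that compute $\Nn(a,b)$ in terms of weight sequences via repeated $\#$-sums.
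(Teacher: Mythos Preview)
Your proposal is correct and is exactly the routine verification the paper has in mind; the paper itself gives no proof beyond the sentence ``The next result follows immediately from the definition,'' and your index-substitution for commutativity, reindexing over triples $(a,b,c)$ with $a+b+c=k$ for associativity, and termwise comparison for monotonicity are precisely the details being omitted. There is nothing to add.
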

 
  For any sequence $\ba=(a_1, \dots,a_M)$ of positive integers, we define
  $$
  \Nn(\ba): = \Nn(a_1,a_1)\# \Nn(a_2,a_2)\#\dots \#\Nn(a_M,a_M).
  $$
  If $a: = a_1=\dots = a_M$ we abbreviate this product as $\#^M \Nn(a,a)$.
  
 To understand the effect of the operation $\#$  on sequences of the form $\Nn(a,b)$,
 it is convenient to interpret the numbers $\Nn_k(a,b)$ in terms of lattice counting as in  Hutchings \cite[\S3.3]{H}.  
For $A>0$ consider the triangle 
$$
T_{a,b}^A: =\bigl \{(x,y)\in\R^2 : x,y\ge 0 ,\, ax+by\le A\bigr\}.
$$  
Each integer point $(m,n)\in T_{a,b}^A$ gives rise to an element of the sequence $\Nn(a,b)$ that is $\le A$. If
$a/b$ is irrational  there is  for all $A$  at most
  one integer point on the slant edge of $T_{a,b}^A$. It follows that
 $
 \Nn_k(a,b) = A$, where $A$ is such that
  $|T_{a,b}^A\cap\Z^2|= k+1$.
 Since for rational $a/b$ there might be more than one integral point on this slant edge, 
the general definition is:
 $$
 \Nn_k(a,b) = \inf\bigl\{A: |T_{a,b}^A\cap\Z^2|\ge k+1\bigr\}.
 $$

 \begin{figure}[htbp] 
    \centering
    \includegraphics[width=5in]{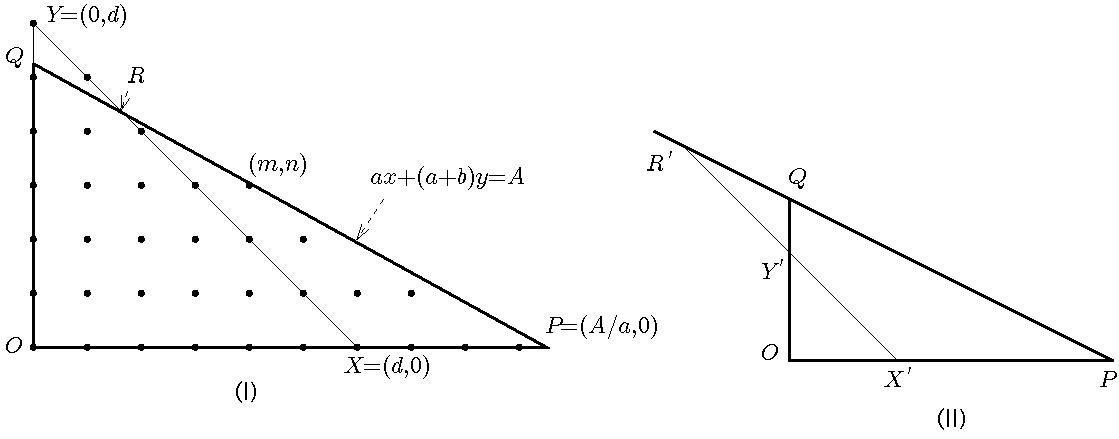} 
    \caption{Different cuts in Lemma \ref{le:hof1}.}
    \labell{fig:1}
 \end{figure}
\begin{lemma}\labell{le:hof1} For all $a,b> 0$, we have $\Nn(a,a) \#\Nn(a,b) =  \Nn(a,a+b)$.
More generally, for all $\ell\ge 1$, we have 
$\bigl(\#^\ell \Nn(a,a) \bigr)\#\Nn(a,b) =  \Nn(a,b + \ell a)$.
\end{lemma}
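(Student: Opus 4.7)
The plan is to reduce to the case $\ell=1$ via associativity, and then prove the base case by cutting the triangle $T_{a,a+b}^t$ along the antidiagonal and counting lattice points in each piece (cf.\ Figure~\ref{fig:1}).

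\textbf{Reduction.} By Lemma~\ref{le:assoc}(i),
\[
\bigl(\#^\ell\Nn(a,a)\bigr)\#\Nn(a,b)=\bigl(\#^{\ell-1}\Nn(a,a)\bigr)\#\bigl(\Nn(a,a)\#\Nn(a,b)\bigr).
\]
Given the base case $\Nn(a,a)\#\Nn(a,b)=\Nn(a,a+b)$, the right-hand side becomes $(\#^{\ell-1}\Nn(a,a))\#\Nn(a,a+b)$; applying the base case $\ell-1$ more times with $b$ replaced successively by $a+b,\,2a+b,\,\dots,(\ell-1)a+b$ then yields $\Nn(a,b+\ell a)$. So it suffices to prove the base case.

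\textbf{Lattice reformulation.} For a nondecreasing sequence $\Cc$ with $\Cc_0=0$, write $\mu_\Cc(t):=\#\{k\ge 0:\Cc_k\le t\}$; two such sequences coincide iff their counting functions agree. Unwinding the definition of $\#$ gives
\[
\mu_{\Cc\#\Dd}(t)=\min_{i\ge 0}\bigl[i+\mu_\Dd(t-\Cc_i)\bigr],
\]
with the convention $\mu_\Dd(s)=0$ for $s<0$. Since $\Nn_i(a,a)=da$ is constant on $i\in[\binom{d+1}{2},\binom{d+2}{2}-1]$, the minimum over $i$ collapses to a minimum over $d$ achieved at the left endpoint, so
\[
\mu_{\Nn(a,a)\#\Nn(a,b)}(t)=\min_{d\ge 0}\Bigl[\binom{d+1}{2}+|T_{a,b}^{t-ad}\cap\Z^2|\Bigr].
\]
On the other hand $\mu_{\Nn(a,a+b)}(t)=|T_{a,a+b}^t\cap\Z^2|$, so the base case reduces to the combinatorial identity
\[
|T_{a,a+b}^t\cap\Z^2|=\min_{d\ge 0}\Bigl[\binom{d+1}{2}+|T_{a,b}^{t-ad}\cap\Z^2|\Bigr]. \tag{$\star$}
\]

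\textbf{Proof of $(\star)$.} For $d\in\Z_{\ge 0}$, cut $T_{a,a+b}^t$ along the antidiagonal $x+y=d$. The lower piece $\{x+y\le d-1\}$ contains at most $\binom{d+1}{2}$ lattice points, with equality iff $(a+b)(d-1)\le t$. The upper piece $\{x+y\ge d\}$ admits the lattice-preserving shift $(m,n)\mapsto(m',n)=(m+n-d,n)$, under which $am+(a+b)n=am'+ad+bn$; its image lies in $T_{a,b}^{t-ad}\cap\Z^2$, cut out there by the auxiliary inequality $n\le m'+d$ (the translate of $m\ge 0$). This proves ``$\le$'' in $(\star)$ for every $d$. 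For equality, the upper image must fill all of $T_{a,b}^{t-ad}$, i.e.\ no lattice point $(m',n)$ there has $n>m'+d$; the extremal case $m'=0$ makes this equivalent to $(a+b)d+b>t$. Both tightness conditions hold simultaneously for any integer $d$ in
\[
\Bigl(\tfrac{t-b}{a+b},\,\tfrac{t}{a+b}+1\Bigr],
\]
a half-open interval of length $1+b/(a+b)>1$, which always contains an integer. For this $d$, equality in $(\star)$ holds.

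\textbf{Main obstacle.} The delicate point is the bookkeeping in the upper piece: the shift introduces the extra constraint $n\le m'+d$ that must be tracked, and matching the two pieces' tightness conditions must be shown to be compatible. The crucial observation is that the relevant window has length strictly greater than $1$, ensuring an integer $d$ always lies in it; without this one might worry that no single $d$ realizes equality in $(\star)$.
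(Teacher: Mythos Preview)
Your proof is correct. The underlying geometry—slicing $T_{a,a+b}$ by an antidiagonal line $x+y=d$ and sending the upper piece to $T_{a,b}$ by the integral affine shear $(m,n)\mapsto(m+n-d,n)$—is exactly the picture in the paper. What differs is the logical packaging. The paper works index-first: it fixes $k$, sets $A=\Nn_k(a,a+b)$, picks the single $d=\lceil A/(a+b)\rceil$, exhibits a split $k=k_1+k_2$ achieving $\Nn_{k_1}(a,a)+\Nn_{k_2}(a,b)=A$, and then argues separately (for $i>k_1$ and $i<k_1$) that every other split gives at most $A$. You instead pass to the dual counting functions $\mu_\Cc$, derive the clean formula $\mu_{\Cc\#\Dd}(t)=\min_i[i+\mu_\Dd(t-\Cc_i)]$, and reduce everything to the lattice identity $(\star)$; this turns the two-case analysis in the paper into a uniform ``$\le$ for all $d$'' argument, with equality handled by the pigeonhole observation that the interval $\bigl(\tfrac{t-b}{a+b},\tfrac{t}{a+b}+1\bigr]$ has length $>1$. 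Your route is slightly slicker (it avoids the case split and the initial reduction to integral $a,b$), while the paper's is more hands-on and perhaps easier to visualise directly from Figure~\ref{fig:1}. Two minor remarks: the map you call a ``shift'' is really an affine shear (still lattice-preserving, so harmless), and it is worth noting explicitly that the right endpoint $\tfrac{t}{a+b}+1\ge 1$ ensures the integer you find is nonnegative.
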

\begin{proof}  By continuity and scaling, it suffices to prove this when $a,b\in \Z$.  Suppose that $\Nn_k(a,a+b) = A$.  Then there is at least one integer point $(m,n)$ on the slant edge $QP$ of the triangle $T: = T_{a,a+b}^A$; see Fig \ref{fig:1} (I). 
Let $d: = \lceil\frac A{a+b} \rceil$ be the smallest  integer greater than $\frac A{a+b}$, and 
let $R$ be the point where the line joining $Y=(0,d)$ to $X=(d,0)$ meets the slant edge of $T$.  Then because the intersection of the line $x+y=d-1$ with the first quadrant
lies entirely in $T$ all the points in the interior of triangle $OXY$ lie in $T$.  Hence 
$\Ss: = T\cap \Z^2$ divides into two sets $\Ss_1$ and $\Ss_2$, where $\Ss_1$ contains $X$ plus all points $(m,n)$ in $T$ with 
$m+n<d$, 
and $\Ss_2$ consists of all other points in $\Ss$.  
Let 
$$
|\Ss_1|= k_1+1,\quad |\Ss_2|= k_2.
$$
Then $k_1+k_2=k$, and our remark above about the triangle $OXY$ implies
$\Nn_{k_1}(a,a) = da$.

Let $\al$ be   the  integral affine transformation that fixes $Y$  and translates the $x$ axis by $-d$ so that  $X$ goes  to the origin $O$. Then $\al$ takes the
 triangle $XRP$ to the triangle $T_{a,b}^B$, where $B/a = A/a - d$.
 The set of integral points in $T_{a,b}^B$ is $\al(\Ss_2\cup \{X\})$.
 Hence $\Nn_{k_2}(a,b)= B=A-da$.
 Thus  $\Nn_{k_1} (a,a) + \Nn_{k_2}(a,b) = \Nn_k(a,a+b)$.
 
 We claim that for all other $i\le k$ we have $\Nn_{i} (a,a) + \Nn_{k-i}(a,b) \le \Nn_k(a,a+b)$.
To see this, we slightly modify the above argument as follows.  
If $i> k_1$ then $\Nn_i(a,a) = ad'$ for some $d'\ge d$.    
Decompose $T$ by the line $x+y=d'$ as above. Then  choose  $\Ss_1'\subset \Ss$ to contain $X'=(d',0)$ together with all points in $T$ with $x+y<d'$
and let $\Ss_2'= \Ss\less \Ss_1'$.  Then $k_1'+1: =|\Ss_1'| \le i+1$  (since now there may be some
integer  points in the interior of triangle $QY'R'$) so that $\Nn_{k_1'}(a,a) \le ad'$, while, as above, $\Nn_{k-k_1'}(a,b)= A-ad'$.   

If $i<k_1$ then we choose $d'$ as before and again slice $T$ by the line $x+y=d'$.  The corresponding triangle $OX'Y'$ is illustrated in Figure \ref{fig:1} (II). The line $X'Y'$ now meets the slant edge of $T$ at $R'$ lying beyond $Q$.  
Hence, if we partition the integral points in $T$ as before, $k_2'+1: = |S_2|+1$ 
is at most the number $\ell+1$ of integral points in $X'R'P$ (and may well be strictly smaller).  Thus 
$\Nn_{k_2'}(a,b) \le \Nn_{\ell}(a,b) = A-ad'$. On the other hand, $\Nn_{k_1'}(a,a) = ad'$ as before.

This completes the proof of the first statement.  The second follows immediately by induction.
\end{proof}

\begin{cor}\labell{cor:1} 
Let $\bW(a,b)$ be the  weight sequence for $(a,b)\in \N^2$.  
Then $\Nn(\bW(a,b))$ $= \Nn(a,b)$. 
\end{cor}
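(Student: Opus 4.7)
The plan is to induct on the length $K+1$ of the continued fraction expansion of $p/q$, where by symmetry of both sides and since $\bW$ is symmetric in its two arguments we may assume $a = p \ge q = b$ are positive integers with $p/q = [\ell_0; \ell_1, \dots, \ell_K]$. The main structural input is the recursion \eqref{eq:Wpq},
$$
\bW(p,q) \;=\; \bigl(q^{\times \ell_0},\; \bW(q, X_1)\bigr),
$$
which reduces $(p,q)$ to the pair $(q, X_1)$, with $X_1 = p - \ell_0 q$, whose continued fraction has length one shorter. I will combine this recursion with Lemma \ref{le:hof1}, which provides exactly the right identity at the level of $\Nn$.

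For the base case take $K = 0$, so that $p = \ell_0 q$, $X_1 = 0$, and $\bW(p,q) = (q^{\times \ell_0})$ (with the trivial subcase $p = q$ absorbed here). Then $\Nn(\bW(p,q)) = \#^{\ell_0}\Nn(q,q)$, and the general form of Lemma \ref{le:hof1} with $a = b = q$ and $\ell = \ell_0 - 1$ yields $\Nn(q, \ell_0 q) = \Nn(p,q)$. For the inductive step, associativity and commutativity of $\#$ (Lemma \ref{le:assoc}) together with the recursion give
$$
\Nn(\bW(p,q)) \;=\; \bigl(\#^{\ell_0}\Nn(q,q)\bigr) \,\#\, \Nn(\bW(q, X_1)).
$$
The inductive hypothesis applied to $(q, X_1)$ rewrites $\Nn(\bW(q, X_1))$ as $\Nn(q, X_1)$, and Lemma \ref{le:hof1} with $a = q$, $b = X_1$, $\ell = \ell_0$ then yields $\Nn(q, X_1 + \ell_0 q) = \Nn(q, p) = \Nn(p,q)$, completing the induction.

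Overall the argument is essentially formal bookkeeping: Lemma \ref{le:hof1}, whose proof by lattice counting is the nontrivial content, does all the real work, and the inductive step is one application of it. I expect no substantive obstacle; the only mildly delicate points are checking that the recursion strictly decreases the induction parameter and handling the degenerate case $p = q$, where the weight sequence is the singleton $(q)$ and the claim is trivial.
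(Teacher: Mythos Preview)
Your proposal is correct and follows essentially the same approach as the paper: induction on the length of the continued fraction expansion, using the recursion \eqref{eq:Wpq} for $\bW$ together with Lemma~\ref{le:hof1} and the associativity of $\#$. The only cosmetic differences are that the paper orders the pair as $a\le b$ and, after reducing to the relatively prime case, takes $a=1$ as the base (which is exactly your $K=0$ case), whereas you allow a common factor and handle the base $q\mid p$ directly.
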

\begin{proof} Without loss of generality, we may suppose that $a,b$ are relatively prime and that $a\le b$.   
If $a=1$ then $
\Nn(1,b) = \#^b \Nn(1,1)
$
by the second statement in Lemma \ref{le:hof1}.

For a general pair $(a,b)$ we argue by induction on the length $K$ of the weight expansion
 $\bW(a,b) = \bigl(X_0^{\times \ell_0},\dots,X_K^{\ell_K}\bigr).
 $
We saw in \eqref{eq:Wpq} above that  $X_1=b-\ell_0 a$ and that
 $\bW(X_1, a) = \bigl(X_1^{\times \ell_1},\dots,X_K^{\ell_K}\bigr).
 $  Therefore, we may assume by induction that 
 $\Nn\bigl(\bW(X_1,a)\bigr)= \Nn(X_1,a)$.  Thus
 \begin{eqnarray*}
 \Nn\bigl(\bW(a,b)\bigr) &=&  \bigl(\#^{\ell_0}\Nn(a,a)\bigr)
 \#\Nn\bigl(\bW(X_1,a)\bigr)\\
 &=&  \bigl(\#^{\ell_0}\Nn(a,a)\bigr)\#
 \Nn(X_1,a) \\
 &=&\Nn(a, X_1+\ell_0 a) = \Nn(a,b),
 \end{eqnarray*}
where the first equality follows from the definition since 
$\bW(a,b)) = \bigl(a^{\times \ell_0}, \bW(X_1,a)\bigr)$, 
the second holds by the inductive hypothesis, and the third by   
Lemma \ref{le:hof1}.
\end{proof}

  The first part of the next lemma was independently observed by  David Bauer during Edifest 2010.
  \begin{figure}[htbp] 
     \centering
     \includegraphics[width=5in]{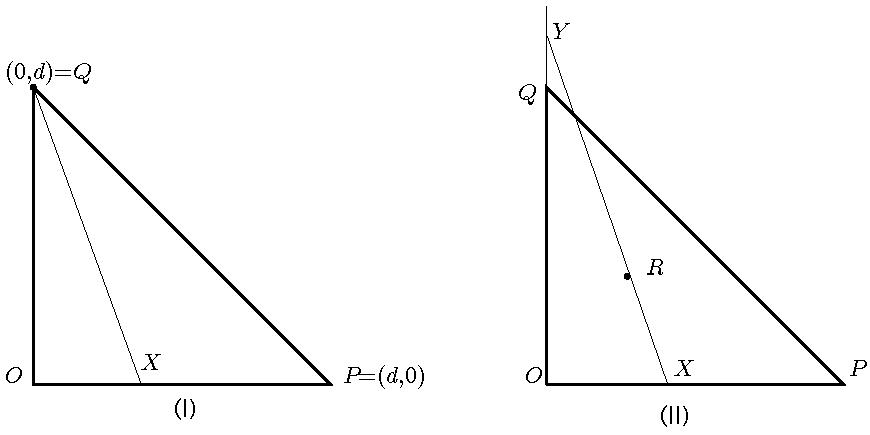} 
     \caption{Decompositions considered in Lemma \ref{le:hof2}.}
     \label{fig:2}
  \end{figure}
  
  \begin{lemma}\labell{le:hof2}  
  \begin{itemize}\item[(i)] Given integers $0<a<b$ we have $\Nn(a,b)\#\Nn(b-a,b) \le \Nn(b,b)$.
  \item[(ii)]  For each $k\ge 1$ there is $\ell$ such that  $\Nn_{k+\ell}(b,b) = \Nn_k(a,b) + 
  \Nn_\ell(b-a,b)$.
  \end{itemize}\end{lemma}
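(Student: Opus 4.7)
The plan for (i) is to construct an explicit injection of lattice points. Let $\alpha=\Nn_i(a,b)$ and $\beta=\Nn_j(b-a,b)$, and define
$$
\Phi : T_{b,b}^{\alpha+\beta-1}\cap\Z^2 \;\longrightarrow\; \bigl(T_{a,b}^{\alpha-1}\cap\Z^2\bigr)\sqcup \bigl(T_{b-a,b}^{\beta-1}\cap\Z^2\bigr)
$$
as follows: if $(x,y)$ satisfies $ax+by\le\alpha-1$, send it to itself in the first summand; otherwise let $y_0(x)$ be the smallest nonnegative integer with $ax+b\,y_0(x)\ge \alpha$, and send $(x,y)$ to $(x,y-y_0(x))$ in the second summand. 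The image of the second rule lies in $T_{b-a,b}^{\beta-1}$ because
$$
(b-a)x + b(y-y_0(x)) \;=\; (bx+by) - (ax+b\,y_0(x)) \;\le\; (\alpha+\beta-1)-\alpha \;=\; \beta-1,
$$
and $\Phi$ is injective, its first coordinate being the identity and the shift $y_0(x)$ depending only on $x$. Because $\Nn_i(a,b)=\alpha$ and $\Nn_j(b-a,b)=\beta$ give $|T_{a,b}^{\alpha-1}\cap\Z^2|\le i$ and $|T_{b-a,b}^{\beta-1}\cap\Z^2|\le j$, we conclude $|T_{b,b}^{\alpha+\beta-1}\cap\Z^2|\le i+j$; equivalently, at most $i+j$ entries of $\Nn(b,b)$ are strictly below $\alpha+\beta$, so $\Nn_{i+j}(b,b)\ge \alpha+\beta$, which is (i).

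For (ii), I will produce $\ell$ explicitly so that equality holds. Given $k\ge 1$, write $\Nn_k(a,b)=am_0+bn_0$ for a lattice representative $(m_0,n_0)$. For integer $s\ge 0$, set $d=m_0+n_0+s$ and $\beta_s=bd-\Nn_k(a,b)=(b-a)m_0+bs$; this $\beta_s$ is attained in $\Nn(b-a,b)$ by the lattice point $(m_0,s)$, and I let $\ell_s$ be a corresponding index. The equality $\Nn_{k+\ell_s}(b,b)=bd=\Nn_k(a,b)+\beta_s$ is then equivalent to the window condition $k+\ell_s \in [\binom{d+1}{2},\binom{d+2}{2}-1]$. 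Part (i) already yields the lower bound $k+\ell_s \ge \binom{d+1}{2}$, so the remaining task is to produce an $s$ for which the upper bound $k+\ell_s \le \binom{d+2}{2}-1$ also holds.

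The main obstacle lies in verifying this upper bound. My expectation is that the symmetric injection (with the roles of $a$ and $b-a$ interchanged) furnishes it, and that comparing how $\ell_s$ and $\binom{d+2}{2}$ vary with $s$ forces the required match for at least one $s$. The two decompositions in Figure~\ref{fig:2} suggest that the argument splits into cases based on which of the cuts $ax+by=\alpha$ or $(b-a)x+by=\beta$ plays the primary role, and the correct case selects the right $s$.
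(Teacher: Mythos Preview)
Your argument for (i) is correct and is in fact tidier than the paper's. The paper proves (i) geometrically: for each $d$ it cuts the triangle $\{x+y\le d\}$ by a line of slope $-b/a$ through $Q=(0,d)$, identifies the two pieces with (affine images of) $T_{b,a}^{ad}$ and $T_{b,b-a}^{(b-a)d}$, and reads off a decomposition $k_1+k_2=(d^2+3d)/2$ with $\Nn_{k_1}(a,b)+\Nn_{k_2}(b-a,b)=bd$. Your explicit injection $\Phi$ handles all splittings $i+j=k$ at once and avoids the affine bookkeeping; the price is that it uses the integrality of $a,b$ (via the ``$-1$'' trick), which is harmless since the lemma is stated for integers.

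Part (ii), however, is not proved. You set up a one-parameter family of candidates $(d,\ell_s)$ with $d=m_0+n_0+s$ and obtain the lower bound $k+\ell_s\ge\binom{d+1}{2}$ from (i), but the upper bound $k+\ell_s\le\binom{d+2}{2}-1$ is left as an ``expectation''. The suggested ``symmetric injection'' is just (i) with $a$ and $b-a$ interchanged, which yields the \emph{same} inequality and gives no new information; and the vague appeal to Figure~\ref{fig:2} and to ``which of the cuts plays the primary role'' is not an argument. The missing idea is that one should not search over $s$ at all but rather pin down the correct $d$ from the outset. The paper does this: with $B=\Nn_k(a,b)$ it takes $d=\lfloor B/a\rfloor$, so that the triangle $T_{b,a}^B$ sits entirely inside $\{x+y\le d\}$ (because $a<b$ forces $bx+ay\ge a(x+y)$). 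One then chooses a lattice point $R$ on the slant edge $bx+ay=B$ and lets $\Ss_1$ be $R$ together with the points of $\{x+y\le d\}$ on the origin side of that line, $\Ss_2$ the rest; the complementary triangle is affinely equivalent to $T_{b,b-a}^{db-B}$, and counting gives $|\Ss_1|=k+1$, $\ell:=|\Ss_2|$, $\Nn_\ell(b-a,b)=db-B$, with $k+\ell=(d^2+3d)/2$ by construction. The specific choice of $d$ is what simultaneously forces both the upper and lower bounds, and this is exactly what your proposal lacks.
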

  \begin{proof} (i) Since $\Nn_i(b,b)$ is constant and equal to $bd$ on the sets of the form  $(d^2+d)/2\le i\le(d^2+3d)/2$, it suffices to consider the case $i = (d^2+3d)/2$.  Then the points counted by $\Nn_i(b,b)$ are those in the triangle $T= OPQ$ in diagram (I) in Figure \ref{fig:2}. Cut this triangle by the 
  line $QX$ of slope $-b/a$, and divide the integer points in $T$ into two groups $\Ss_1,\Ss_2$
  where $\Ss_1$ consists of $Q$ plus the points to the left of $QX$ and $\Ss_2$ is the rest.
  Let $|\Ss_1|=k_1+1$ and $ |\Ss_2|=k_2 $ as before.
  Then, counting the points in triangle $OQX = T_{b,a}^{ad}$,  
  we see that $\Nn_{k_1} = ad$.
  Further, if we move the triangle $PXQ$ first by the affine transformation that fixes the $x$-axis and takes $Q$ horizontally to the point $(d,d)$ and then by the reflection in 
  the vertical line $x=d$, it is a horizontal translate of $T_{b,b-a}^{d(b-a)}$.
  Hence counting the points in the triangle $PQX$,  
  we see that $\Nn_{k_2} = (b-a)d$.  Thus $\Nn_{k_1}+\Nn_{k_2} = bd$. This proves (i).
  
  To prove (ii), suppose $\Nn_k(a,b) = B$ and let $R$ be an integer point on the slant edge of the corresponding triangle $T_{b,a}^B$.  Let $Y$ be the point where the
  slant edge of this triangle meets the $y$-axis, so that $Y = (0,B/a)$, and let $X$ be where it meets the $x$-axis, so that $X = (B/b,0)$.
  Next,  let $Q = (0,d)$ be the integer point with $d=\lfloor B/a\rfloor$, and put $P=(d,0)$ as before.  Then  no points  
  in the triangle $T_{b,a}^B$ lie above the line $PQ$ since $|YQ|<1$.  We now divide
  the points in triangle $OPQ$ into two sets as before, with $\Ss_1$ the union of $R$ with all points to the left of $XY$ and $\Ss_2$ the rest.   
  Then $|\Ss_1|=k+1$ by construction.   Further, as in (i), the points in $\Ss_2$ lie in a triangle that is affine equivalent to $T_{b,b-a}^{A}$, where $A/b =|PX| = d-B/b$.   Moreover $\Ss_2$ contains $P$ (which corresponds to the origin) but not $R$, which is a point on the slant edge.
  Thus  $T_{b,b-a}^{A}$ has $|\Ss_2|+1$ integer points, so that, if $\ell: = |\Ss_2|$ we have $\Nn_\ell(b,b-a) = A=db-B$.  Therefore $\Nn_k(a,b) + \Nn_\ell(b,b-a) = db$.
  But $k+\ell = (d^2+3d)/2$ by construction, so that $\Nn_{k+\ell}(b,b) = db$.
  The result follows.
   \end{proof}
  
  \begin{cor}\labell{cor:2} Let $\Cc$ be a nondecreasing sequence of nonnegative numbers
  such  that $\Cc\#\Nn(d-c,d)\le \Nn(d,d)$.  Then
  $\Cc\le \Nn(c,d)$.
  \end{cor}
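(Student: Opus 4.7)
The plan is to prove the contrapositive: assume $\Cc \not\preccurlyeq \Nn(c,d)$, and produce an index at which $\Cc\#\Nn(d-c,d)$ exceeds $\Nn(d,d)$, contradicting the hypothesis. Since $\Cc_0 = 0 = \Nn_0(c,d)$ (as required for $\#$ to be defined), the inequality $\Cc \not\preccurlyeq \Nn(c,d)$ gives some $k\ge 1$ with $\Cc_k > \Nn_k(c,d)$.

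The key observation is that Lemma \ref{le:hof2}(ii), applied with $a=c$ and $b=d$ (so that $0<a<b$ as required), produces, for this very $k$, an integer $\ell$ such that
$$
\Nn_{k+\ell}(d,d) \;=\; \Nn_k(c,d) + \Nn_\ell(d-c,d).
$$
This is the pivotal equality: it says that the maximum realizing $(\Nn(c,d)\#\Nn(d-c,d))_{k+\ell}$ is actually attained at the split $(k,\ell)$, so any upward perturbation of $\Nn_k(c,d)$ immediately propagates to an upward perturbation at index $k+\ell$ of the $\#$-sum.

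With $\ell$ in hand, by the definition of $\#$ I would estimate
$$
\bigl(\Cc\#\Nn(d-c,d)\bigr)_{k+\ell} \;\ge\; \Cc_k + \Nn_\ell(d-c,d) \;>\; \Nn_k(c,d) + \Nn_\ell(d-c,d) \;=\; \Nn_{k+\ell}(d,d),
$$
which directly contradicts the assumption $\Cc\#\Nn(d-c,d)\le \Nn(d,d)$. This completes the contrapositive and hence the corollary.

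There is no real obstacle here beyond noticing that Lemma \ref{le:hof2}(ii) is tailor-made for this argument: part (i) of that lemma asserts the non-strict inequality $\Nn(c,d)\#\Nn(d-c,d)\le \Nn(d,d)$, while part (ii) asserts that this inequality is sharp at infinitely many indices $k+\ell$, with the maximum attained precisely at the split where the first coordinate is $k$. It is exactly this sharpness that allows one to ``invert'' the hypothesis from $\#$-sums back to $\Cc$ itself. The only minor case to flag is $k=0$, handled at the outset, and the implicit assumption $0<c<d$ under which the statement is non-vacuous (the endpoint cases $c=0$ or $c=d$ are degenerate).
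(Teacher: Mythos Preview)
Your proof is correct and is essentially identical to the paper's own argument: both proceed by contradiction, take $k$ with $\Cc_k>\Nn_k(c,d)$, invoke Lemma~\ref{le:hof2}(ii) to obtain $\ell$ with $\Nn_{k+\ell}(d,d)=\Nn_k(c,d)+\Nn_\ell(d-c,d)$, and conclude $(\Cc\#\Nn(d-c,d))_{k+\ell}>\Nn_{k+\ell}(d,d)$. Your explicit handling of the case $k=0$ (forced by the hypothesis $k\ge 1$ in Lemma~\ref{le:hof2}(ii)) is a nice touch that the paper leaves implicit.
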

  \begin{proof}  If not, there is $k$ such that $\Cc_k> \Nn_k(c,d)$.   
But by the previous lemma, there is $\ell$ such that  $\Nn_{k+\ell}(d,d) 
= \Nn_k(c,d) + 
  \Nn_\ell(d-c,d)$.  Then $\Cc_k+\Nn_\ell(d-c,d) > \Nn_{k+\ell}(d,d)$ 
contradicting the hypothesis.
  \end{proof}

\section{Proof of Theorem \ref{thm:hof}}

Our argument is based on the following key results.

\begin{prop}\labell{prop:M} \cite[Thm.~3.11]{M}
Let $c,d,e,f$ be any positive integers and $\la>0$.  Then there is a
symplectic embedding 
$$
\Phi_E:\;\; \intt \la  E(e,f)\;\;\se \;\;\intt E(c,d)
$$
 if and only if there is a
symplectic embedding 
$$
\Phi_B:\;\;  \intt \la B\bigl(\bW(e,f)\bigr)\; \sqcup\; \intt B\bigl(\bW(d-c, d)\bigr)\;\;\se\;\; B(d).
$$
\end{prop}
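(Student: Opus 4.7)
The proposition is \cite[Thm.~3.11]{M}, so I sketch the strategy used there. The heart of the argument is a \emph{weight decomposition lemma}: for positive integers $a,b$ and a suitable open symplectic $4$-manifold $U$, there is a symplectic embedding $\intt E(a,b) \se U$ if and only if the disjoint union $\intt B(\bW(a,b)) = \sqcup_i \intt B(W_i)$ embeds symplectically in $U$. Granting this, Proposition~\ref{prop:M} follows by unpacking the domain $\la E(e,f)$ and, dually, the target $E(c,d)$.

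I would first prove the weight decomposition lemma by induction on the length $K+1$ of the expansion $\bW(e,f) = (X_0^{\times \ell_0}, \ldots, X_K^{\times \ell_K})$, exploiting the recursion from Lemma~\ref{le:W}. The base case $K = 0$ amounts to $E(e,e) = B(e)$. For the inductive step, one shows geometrically that $\ell_0$ disjoint balls of size $X_0$ can be embedded inside $\intt E(e,f)$ (arranged along the long axis), with complement symplectically equivalent to the smaller ellipsoid $E(X_0, X_1)$ to which the induction applies. The converse direction, producing $E(e,f)$ from a disjoint ball packing, is harder: it relies on symplectic blowup/blowdown and the Gromov--McDuff inflation technique applied to exceptional divisors in iterated toric blowups of $\CP^2$.

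Next I would establish the target-side identity: embedding into $\intt E(c,d)$ is equivalent to embedding into $B(d)$ disjoint from a fixed copy of $\intt B(\bW(d-c,d))$. Concretely, one constructs a full-measure symplectic embedding $\intt E(c,d)\, \sqcup\, \intt B(\bW(d-c,d)) \se B(d)$ using the ``conjugate'' ellipsoid $E(d-c,d)$ (whose associated weight sequence is exactly $\bW(d-c,d)$ by definition) to fill the complementary region $B(d)\less E(c,d)$ via the domain decomposition lemma applied to $E(d-c,d)$. Once such a decomposition of $B(d)$ is in hand, embedding anything into $\intt E(c,d)$ is the same as embedding it into $B(d)$ avoiding the distinguished copy of $B(\bW(d-c,d))$.

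Combining yields Proposition~\ref{prop:M}: the target identity converts $\intt \la E(e,f) \se \intt E(c,d)$ into $\intt \la E(e,f)\, \sqcup\, \intt B(\bW(d-c,d)) \se B(d)$, and the domain decomposition applied to the $\la E(e,f)$ factor replaces it by $\intt \la B(\bW(e,f))$. The main obstacle is the cohomological bookkeeping for the inflation argument on blowups of rational surfaces that underlies the reverse direction of the weight decomposition; this is where the passage from a disjoint ball packing back to an embedded ellipsoid becomes nontrivial, and where almost all of the geometric content of \cite{M} is concentrated.
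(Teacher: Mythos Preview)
Your sketch is in line with what the paper actually does here, which is simply to cite \cite[Thm.~3.11]{M} and add one paragraph of intuition: the easy direction ($\Phi_E\Rightarrow\Phi_B$) follows from the toric picture in which the moment triangle of $E(p,q)$ is cut into standard triangles of sizes $\bW(p,q)$ (Figure~\ref{fig:4}), while the hard direction ($\Phi_B\Rightarrow\Phi_E$) is attributed to Taubes--Seiberg--Witten theory together with $J$-holomorphic curve methods (inflation). You have identified both ingredients and located the difficulty in the right place.

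One point to tighten: your ``target-side identity'' as written---that embedding into $\intt E(c,d)$ is the same as embedding into $B(d)$ \emph{avoiding a distinguished copy} of $B(\bW(d-c,d))$---is tautologically true once you have the full-measure decomposition, but it is weaker than what the proposition asserts. The proposition allows the balls $B(\bW(d-c,d))$ to sit inside $B(d)$ in \emph{any} configuration, and you must still conclude that the remaining piece embeds in $E(c,d)$. That step does not follow from the decomposition alone; it needs exactly the same inflation/blowup machinery you invoke for the domain side. In \cite{M} the two sides are not proved as separate lemmas but simultaneously: one passes to the common blowup of $\C P^2$, checks the cohomological condition for the existence of a suitable symplectic form, and then blows down in two different ways to recover either $\Phi_E$ or $\Phi_B$. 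So your modular outline is heuristically right, but the clean separation into a ``domain lemma'' plus an easy ``target lemma'' slightly misrepresents where the hard work sits.
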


The proof that the numerical condition is sufficient  for an embedding to exist
involves a significant use of Taubes--Seiberg--Witten theory in conjunction with
the theory of $J$-holomorphic curves.  However it is easy to see why
it is necessary, since the ellipsoid  $E(p,q)$ decomposes into a union of balls
 whose sizes are given by the weights $\bW(p,q)$; cf Figure \ref{fig:4}.
 To see this, recall that the
 moment (or toric) image of the ball is affine equivalent to a standard triangle (a right-angled isosceles triangle), while that of an ellipsoid is an arbitrary right-angled triangle.  As the diagram shows,  the 
decomposition  of a rectangle into squares given by the weights (as in Figure \ref{fig:3}) yields a
corresponding decomposition of the rectangle into (affine) standard triangles; see  
\cite[\S2]{Mcf} for more detail.
 
\begin{figure}[htbp] 
   \centering
   \includegraphics[width=4in]{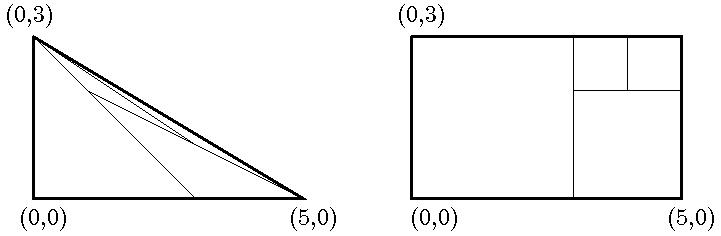} 
   \caption{Cutting a triangle  into standard triangles.}
   \label{fig:4}
\end{figure}

The next result follows from Hutchings' observation in 
\cite[Remark~1.10]{H} that the ECH capacities $\Nn(a,b)$ give a sharp obstruction for ball embeddings.  We give the proof to make it clear that it does not 
use any knowledge of ECH although it does use easier gauge theoretic  results.

\begin{prop}\labell{prop:H} \cite{H}
Let $\ba: = (a_1,\dots, a_M)$ be any sequence of positive numbers.  Then
there is a symplectic embedding  $\Phi_B:\sqcup_i \intt B(a_i)\se B(\mu)$ exactly if 
$\Nn(\ba): = \#_i \Nn(a_i,a_i)\preccurlyeq \Nn(\mu,\mu)$.
\end{prop}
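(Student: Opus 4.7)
The plan is to combine the classical McDuff--Polterovich criterion for ball-into-ball packings with the combinatorial framework from Section~2. By continuity we may assume all the $a_i$ are positive integers.

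The first step is to unpack $\Nn(\ba) \preccurlyeq \Nn(\mu, \mu)$. By the maximality of $\Nn(\mu, \mu)$ recorded in Example \ref{ex:1}(i), this inequality is equivalent to $\Nn(\ba)_{(d^2+3d)/2} \le d\mu$ for every $d \ge 0$. Unravelling the definition of $\#$ yields
$$
\Nn(\ba)_k = \max\Bigl\{ \textstyle\sum_i a_i m_i : (m_i) \in \Z_{\ge 0}^M,\ \sum_i m_i(m_i+1) \le 2k \Bigr\},
$$
so $\Nn(\ba) \preccurlyeq \Nn(\mu, \mu)$ is equivalent to the ``sub-exceptional'' system: $\sum_i a_i m_i \le d\mu$ for every $d \ge 0$ and every $(m_i) \in \Z_{\ge 0}^M$ satisfying $\sum_i m_i(m_i+1) \le d(d+3)$.

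Next, I would invoke the ``easier gauge-theoretic result'' --- McDuff--Polterovich's criterion via Taubes--Seiberg--Witten theory and $J$-holomorphic curves on the $M$-fold blowup of $\CP^2$, which does \emph{not} require ECH: the embedding $\sqcup_i \intt B(a_i) \se B(\mu)$ exists iff for every ``exceptional'' tuple $(d; m_1, \dots, m_{M'}) \in \Z_{\ge 0}^{M'+1}$ (with $M' \ge M$ allowed, the extra $a_j$'s set to $0$) satisfying $\sum_i m_i^2 = d^2+1$ and $\sum_i m_i = 3d-1$, one has $d\mu \ge \sum_i m_i a_i$. Every exceptional tuple satisfies $\sum m_i(m_i+1) = (d^2+1)+(3d-1) = d(d+3)$, so it is sub-exceptional with equality. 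Consequently the sub-exceptional condition from Step~1 immediately implies all exceptional inequalities, which via McDuff--Polterovich yields the embedding. This handles the ``if'' direction of Proposition \ref{prop:H}.

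The converse direction is the harder one and will require a combinatorial ``completion'' lemma: every sub-exceptional tuple $(d; m_i)$ can be enlarged, by appending extra coordinates $m_{M+1}, \dots, m_{M'} \in \Z_{\ge 0}$ (with $a_j = 0$ for $j > M$), to a tuple that is genuinely exceptional for the same value of $d$. Appending a coordinate equal to $1$ shifts both $\sum m_i$ and $\sum m_i^2$ by $1$, while appending an entry $e \ge 2$ shifts them by $e$ and $e^2$ respectively, so the completion reduces to simultaneously solving the two linear conditions $\sum m_i = 3d-1$ and $\sum m_i^2 = d^2+1$ by a mix of appended ones and at most one appended entry of larger value. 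The main obstacle is carrying this out uniformly, since sub-exceptionality is a single inequality whereas exceptionality is a codimension-$2$ equality; I expect that the $\#$-identities in Lemma \ref{le:hof2} and Corollary \ref{cor:2} will furnish the clean arithmetic needed to organize this reduction.
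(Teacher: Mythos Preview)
Your treatment of the ``if'' direction (numerical condition $\Rightarrow$ embedding) matches the paper's, modulo one omission: the Li--Liu description of $\Cc_K$ also requires $\al^2>0$, i.e.\ the volume inequality $\sum a_i^2\le\mu^2$, which you must extract from the sub-exceptional system (the paper does this by letting $d\to\infty$).

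Your proposed ``only if'' direction, however, has a genuine gap: the completion lemma is \emph{false}. Take $M=1$, $d=3$, $m_1=3$. Then $m_1(m_1+1)=12\le 18=d(d+3)$, so $(3;3)$ is sub-exceptional. To complete it to an exceptional tuple with the same $d$ you would need appended entries $e_j\ge 0$ with $\sum e_j=(3d-1)-m_1=5$ and $\sum e_j^2=(d^2+1)-m_1^2=1$; since each $e_j^2\ge e_j$ this is impossible. More generally, whenever $\sum m_i^2>d^2+1$ (which is perfectly compatible with $\sum m_i(m_i+1)\le d(d+3)$) no completion exists. There is also a second obstacle you do not address: even a tuple satisfying $\sum m_i=3d-1$ and $\sum m_i^2=d^2+1$ need not lie in $\Ee_K$, so completion alone would not suffice. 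Finally, Lemma~\ref{le:hof2} and Corollary~\ref{cor:2} concern the $\#$-structure of the sequences $\Nn(a,b)$ and say nothing about extending integer tuples to exceptional classes; they will not rescue this approach.

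The paper proceeds quite differently. It proves an algebraic lemma (Lemma~\ref{le:alg}): if $(\mu;\ba)$ pairs nonnegatively with every $E\in\Ee_K$ and satisfies $\|\ba\|\le\mu$, then it pairs nonnegatively with every $(d;\bm)\in\Ff^+$. The proof does not try to enlarge $(d;\bm)$; instead it acts on $(\mu;\ba)$ by Cremona transformations and reorderings until it becomes \emph{reduced} (i.e.\ $\mu\ge a_1+a_2+a_3$), tracking the image of $(d;\bm)$ along the way. A short elementary lemma (Lemma~\ref{le:red}) then shows that a reduced tuple pairs nonnegatively with any positive $(d;\bm)$ satisfying $-K\cdot(d;\bm)\ge0$, and a case analysis handles the possibility that some coordinates of the transformed $(d;\bm)$ become negative. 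This Cremona-reduction argument is the missing idea.
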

 \begin{proof}   Hutchings showed in \cite[Proposition~1.9]{H}
 that $\Nn(\ba)\preccurlyeq \Nn(\mu,\mu)$ exactly if 
 \begin{equation}\labell{eq:ba}
 \sum_i m_ia_i\le \mu d, \quad \mbox{ whenever }
 \sum_i m_i^2 + m_i \le d^2 + 3d,
 \end{equation}
 where $(d;m_1,\dots,m_M)=: (d;\bm)$ is any sequence of nonnegative integers.
 This holds because
 the $k$th entry in the sequence 
 $\Nn(\ba)= \#_{i=1}^M \Nn(a_i,a_i)$ is the maximum of the numbers
 $$
 \sum_i\Nn_{k_i}(a_i,a_i) = \sum_i m_i a_i,
 $$ 
 where $\sum_i k_i = k$
 and $\frac 12 (m_i^2 + m_i) \le k_i\le \frac 12 (m_i^2 + 3m_i).$

On the other hand  it was shown by McDuff--Polterovich \cite{MP} that $\Phi_B$ exists exactly if for each $\la \in (0,1)$ there is a symplectic representative of the cohomology class $\al_\la: = \mu\ell -\la^2 \sum_ia_i e_i$ on the $M$-fold blowup 
$X_M: = \C P^2 \# M\ov{\C P}\,\!^2$ that is deformation equivalent to the small blowup of a form on $\C P^2$ and hence has standard first Chern class.  (Here $\ell$ and $e_i, i=1,\dots,M,$ denote
 the Poincar\'e duals of the classes of the line $L$ and the exceptional divisors $E_i$.)
Thus we need $\al_\la$ to lie
 in the symplectic cone $\Cc_K$ of $X_M$ given by the classes of all symplectic forms with first Chern class Poincar\'e dual to $-K: = 3L-\sum_iE_i$.  
 
 After preliminary work by McDuff \cite{Mdef} and Biran \cite{B} concerning the closure of $\Cc_K$,
 Li--Liu in \cite[Theorem~3]{LL}  used 
 Taubes--Seiberg--Witten theory to describe $\Cc_K$ 
  in the following terms.
 Let $\Ee_K\subset H_2(X_M;\Z)$ be the set of classes $E$ with 
 $-K\cdot E = 1$  that can be represented by smoothly 
 embedded spheres of self-intersection $-1$.  Then  Li--Liu showed that $\Cc_K$ is  connected and has the following description:
$$
\Cc_K = \{\al\in H^2(X_M;\R): \al^2>0, \langle 
\al,E\rangle>0\ \forall E\in \Ee_K\}.
$$
Notice that if $E = dL-\sum m_iE_i \in \Ee_K$ then
$\sum m_i^2 = d^2 + 1$ while $\sum m_i = 3d-1$.   Therefore each such $E$ does give rise to an inequality of the type considered in
\eqref{eq:ba}.\footnote
{
The cone $\Cc_K$ is described by strict inequalities, but when we let $\la\to 1$ these correspond to the $\le$ signs in \eqref{eq:ba}.} 
It is also easy to check that the inequalities in 
 \eqref{eq:ba} for $d\to \infty$ imply that $\sum a_i^2\le \mu$, which corresponds to 
 the volume condition $\al^2>0$; cf. \cite[Remark~3.13]{H}.
However, because  many tuples $(d;\bm)$ with 
$\sum_i m_i^2 + m_i \le d^2 + 3d$ do not correspond to elements in $\Ee_K$, it seems on the face of it that the conditions in
\eqref{eq:ba} are more stringent than the geometric condition $\al\in \Cc_K$. 
Lemma \ref{le:alg} below gives a purely algebraic argument 
 showing that this is not the case.
 \end{proof}

Let $\Ff$ be the set of all tuples $(d;\bm): = (d;m_1,\dots,m_M)$
such that $\sum_i(m_i^2 + m_i) \le d^2 + 3d$. 
We denote by $\Ff^+$ those elements $(d;\bm)\in \Ff$ with $d>0$ and $m_i\ge 0$ for all $i$.
Further we define $(\mu;\ba)\cdot (d;\bm): = d\mu -\sum a_i m_i=: d\mu-\ba\cdot \bm.$  Then, if $-K=(3;1,\dots,1)$, we have
$$ 
(d;\bm)\in \Ff\Longleftrightarrow (d;\bm)\cdot\big((d;\bm) - K\bigr)  \ge 0.
$$
Similarly, we  identify the cone $\Cc_K\subset H^2(X_M;\R)$ 
with the set of tuples $(\mu;\ba)$
given by the coefficients of the classes 
 $\al=\mu \ell -\sum a_ie_i\in \Cc_K$. In this notation, it suffices 
 to prove the following lemma.

\begin{lemma}\labell{le:alg}
Let $(\mu;\ba): = (\mu; a_1,\dots,a_M)$ be a tuple of nonnegative numbers such that 
\begin{itemize}\item[(i)]
$(\mu;\ba)\cdot (d;\bm)\ge 0$ for all $(d;\bm)\in \Ee_K$.
\item[(ii)] $\|\ba\|: = \sqrt {\sum_i a_i^2}\le \mu$.
\end{itemize}
Then $(\mu;\ba)\cdot (d;\bm)\ge 0$ for all  $(d;\bm)\in \Ff^+.$ 
\end{lemma}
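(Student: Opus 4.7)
The plan is a Cremona reduction to a ``reduced'' case where Cauchy--Schwarz and condition (ii) finish the argument. Since all hypotheses are symmetric under simultaneous permutations of coordinates, I will first reorder so that $a_1\ge a_2\ge\cdots\ge a_M\ge 0$. By the rearrangement inequality, it will then suffice to verify $(\mu;\ba)\cdot(d;\bm)\ge 0$ for sorted tuples with $m_1\ge\cdots\ge m_M\ge 0$.

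The key tool is the Cremona move
$$
\sigma(d;\bm)=\bigl(2d-m_1-m_2-m_3;\, d-m_2-m_3,\, d-m_1-m_3,\, d-m_1-m_2,\, m_4,\ldots\bigr),
$$
which preserves both $d^2-\sum m_i^2$ and $3d-\sum m_i$, hence preserves $\Ee_K$ and the $\Ff$-condition, and satisfies the identity
$$
(\mu;\ba)\cdot\sigma(d;\bm)-(\mu;\ba)\cdot(d;\bm)=(d-m_1-m_2-m_3)(\mu-a_1-a_2-a_3).
$$
I will call $(d;\bm)$ \emph{reduced} if $m_1+m_2+m_3\le d$ (with missing $m_i$ set to $0$), and argue by strong induction on $d$. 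For the reduced base case, Cauchy--Schwarz together with condition (ii) gives
$$
\sum_i a_im_i\le\|\ba\|\,\|\bm\|\le\mu\,\|\bm\|,
$$
so the conclusion will follow once one establishes the sublemma: \emph{every reduced sorted tuple $(d;\bm)\in\Ff^+$ satisfies $\sum m_i^2\le d^2$.} This should be purely combinatorial: sortedness and reducedness force $m_i\le d/3$ for $i\ge 3$, so $\sum_{i\ge 3}m_i^2\le (d/3)\sum_{i\ge 3}m_i$; combined with $m_1^2+m_2^2+m_3^2\le (m_1+m_2+m_3)^2\le d^2$ and the $\Ff^+$-bound $\sum m_i^2+\sum m_i\le d^2+3d$, this should yield the claim.

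For the non-reduced step $m_1+m_2+m_3>d$, I would apply $\sigma$, which strictly reduces the first coordinate. When $\mu\ge a_1+a_2+a_3$, the identity above shows the pairing cannot increase under $\sigma$, so a minimal counterexample would descend to one with smaller $d$, contradicting the inductive hypothesis. An auxiliary subtlety is that some coordinates of $\sigma(\bm)$ may become negative; these can be handled by Weyl-group reflections on $H^2(X_M)$, which also preserve $\Ff$ and $\Ee_K$. The delicate case is $\mu<a_1+a_2+a_3$, where the pairing actually \emph{increases} under $\sigma$: here I would invoke condition (i) for additional $\Ee_K$-classes such as $(2;1^{\times 5})\in\Ee_K$, which yields $a_1+a_2+a_3\le 2\mu-a_4-a_5$, and combine this with the volume condition through a refined Cauchy--Schwarz estimate to bound $(\mu;\ba)\cdot(d;\bm)$ directly.

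The main obstacle will be exactly this non-monotone case $\mu<a_1+a_2+a_3$: because Cremona moves do not decrease the pairing there, the minimal-counterexample strategy breaks down and must be replaced by a finer argument exploiting the full set of $\Ee_K$-inequalities in tandem with the volume bound (ii).
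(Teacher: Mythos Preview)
Your strategy has a genuine gap in exactly the place you identify: the case $\mu<a_1+a_2+a_3$.  There the pairing moves the wrong way under $\sigma$, so the minimal-counterexample descent gives you nothing, and your proposed fallback (invoke $(2;1^{\times 5})$ and a ``refined Cauchy--Schwarz'') is not an argument---there is no finite list of $\Ee_K$-inequalities that will close this case, and you have not indicated any mechanism by which the full infinite family would combine with (ii) to yield the desired bound.  The ``Weyl-group reflections'' you invoke for negative coordinates of $\sigma(\bm)$ are also problematic: the reflection $m_i\mapsto -m_i$ does \emph{not} preserve $\Ff$ (it changes $\sum m_i$), so that step needs a different treatment (replacing negative $m_i$ by $0$ works, since $m^2+m\ge 0$ for all integers and $a_i\ge0$).

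The paper resolves the difficulty by reversing the roles: it applies the Cremona moves to $(\mu;\ba)$ rather than to $(d;\bm)$.  The point is that hypotheses (i) and (ii) say precisely that $(\mu;\ba)$ lies in the closure of the symplectic cone $\Cc_K$; since the Cremona transformation is induced by a diffeomorphism of $X_M$, it permutes $\Ee_K$ and hence preserves $\Cc_K$.  Thus \emph{every} element in the Cremona/permutation orbit of $(\mu;\ba)$ remains nonnegative, and one can always reduce $(\mu;\ba)$ to an element $(\mu';\ba')$ with $\mu'\ge a_1'+a_2'+a_3'$.  Apply the same word $C$ to $(d;\bm)$; since $Cr$ preserves the intersection pairing, $(\mu;\ba)\cdot(d;\bm)=(\mu';\ba')\cdot(d';\bm')$.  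Now a short partitioning argument (group the multiset $(1^{m_1'},\ldots,M^{m_M'})$ into at most $d'$ triples; each contributes at most $\mu'$ because $(\mu';\ba')$ is reduced) shows $(\mu';\ba')\cdot(d';\bm')\ge0$ whenever $(d';\bm')$ is positive with $-K\cdot(d';\bm')\ge0$.  The residual cases where $(d';\bm')$ has negative entries or $d'<0$ are then handled by zeroing out negative $m_i'$ (staying in $\Ff^+$) and a short induction on the length of $C$.  This completely sidesteps the obstruction you ran into, because the problematic inequality $\mu<a_1+a_2+a_3$ has been \emph{removed} rather than worked around.
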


The proof is based on the following elementary result, that is part of
 Li--Li  \cite[Lemma~3.4]{LL2}.
We say that a tuple
 $(\mu;\ba)$ is {\bf positive} if $\mu\ge 0$ and $a_i\ge 0$ for all $i$; that it is
 {\bf ordered} if $a_1\ge\dots\ge a_M$;
 and that a positive, ordered $(\mu;\ba)$  is {\bf reduced} if $\mu\ge a_1+a_2+a_3$.

\begin{lemma}\labell{le:red}  Let $(\mu;\ba)$ be reduced and $(d;\bm)$ be a positive tuple such that
$-K\cdot (d;\bm) = 3d-\sum m_i \ge 0$.  Then $(\mu;\ba)\cdot(d;\bm) \ge 0$.
\end{lemma}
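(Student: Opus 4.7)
My plan is to reduce the inequality to a clean partial-summation estimate. Since $\ba$ is ordered decreasingly, the rearrangement inequality shows that $\sum_i a_i m_i$ is maximized, over all permutations of the entries of $\bm$, when $\bm$ is itself rearranged in decreasing order; permuting $\bm$ does not affect the hypothesis $3d - \sum m_i \ge 0$ or positivity, so I may assume without loss of generality that $m_1 \ge m_2 \ge \cdots \ge m_M \ge 0$. With this, the reduced condition $\mu \ge a_1 + a_2 + a_3$ should interact transparently with partial sums of $\bm$.

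\textbf{The estimate.} With $\bm$ sorted, set $S_i := m_1 + \cdots + m_i$ and $a_{M+1} := 0$. Abel summation gives
$$\sum_{i=1}^M a_i m_i \;=\; \sum_{i=1}^M (a_i - a_{i+1})\, S_i,$$
with nonnegative weights $a_i - a_{i+1}$. The $(-K)$-positivity provides the uniform bound $S_i \le 3d$, and the decreasing ordering combined with the expected bound $m_1 \le d$ gives the finer $S_1 \le d$ and $S_2 \le 2d$. Plugging these in,
$$\sum a_i m_i \;\le\; (a_1 - a_2)\, d \,+\, (a_2 - a_3)\, 2d \,+\, 3d \sum_{i \ge 3}(a_i - a_{i+1}) \;=\; d(a_1 + a_2 + a_3)\;\le\; d\mu,$$
where the telescoping yields $a_1 + a_2 + a_3$ and the final inequality is the reduced condition.

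\textbf{Main obstacle.} The nonobvious ingredient is the bound $m_1 \le d$, which in turn gives the sharper bounds on $S_1$ and $S_2$. The literal hypothesis $\sum m_i \le 3d$ is not strong enough on its own, as it allows $m_1$ as large as $3d$, and then the reduced inequality cannot absorb the contribution $(a_1 - a_2)\,m_1$. In the intended application one has $(d;\bm) \in \Ff^+$, so $\sum(m_i^2 + m_i) \le d^2 + 3d$; combined with $m_i \ge 0$ this forces $m_1(m_1+1) \le d(d+3)$, hence $m_1 \le d$. I therefore expect the proof to invoke this extra input (perhaps implicitly via how the lemma is deployed in Lemma \ref{le:alg}), or to perform a short Cremona-type reduction to the case $m_1 + m_2 + m_3 \le d$, after which the Abel-summation argument above concludes the proof directly.
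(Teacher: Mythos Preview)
Your Abel-summation approach is correct and is a genuinely different route from the paper's. The paper argues combinatorially: since $\sum_i m_i \le 3d$, it partitions the multiset $(1^{\times m_1},\dots,M^{\times m_M})$ into $d$ groups $I_1,\dots,I_d$, each containing at most three \emph{distinct} indices, with index $j$ lying in exactly $m_j$ of them; then $\sum_i a_i m_i = \sum_n \sum_{i\in I_n} a_i \le \sum_n \mu = d\mu$, applying the reduced hypothesis $\mu\ge a_1+a_2+a_3$ to each group. No Cremona moves are used inside the lemma.

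Your suspicion about the missing hypothesis is well founded, and it applies equally to the paper's argument: the partition into $d$ groups of \emph{distinct} indices cannot exist unless $\max_i m_i \le d$, since any fixed index $j$ can occupy at most $d$ of the $I_n$. Indeed the lemma as literally stated is false: with $M=3$, $(\mu;\ba)=(4;2,1,1)$ (reduced) and $(d;\bm)=(1;3,0,0)$ (positive, $\sum m_i = 3 = 3d$), one gets $(\mu;\ba)\cdot(d;\bm)=4-6<0$. Both proofs therefore need $m_i\le d$, which---exactly as you note---follows once $(d;\bm)\in\Ff^+$ via $m_1(m_1+1)\le\sum_i(m_i^2+m_i)\le d(d+3)$, and this is precisely the setting in which the lemma is invoked inside Lemma~\ref{le:alg}. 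Granting that input, your partial-summation estimate is a clean alternative that only needs $m_1\le d$ after sorting; the paper's triple-partition and your telescoping both collapse to the single inequality $a_1+a_2+a_3\le\mu$.
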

\begin{proof}
  Because $\sum_im_i\le 3d$,  we may
 partition the list 
 $$
 (1^{\times m_1}, 2^{\times m_2},\dots, M^{\times m_M})
 $$
  (considered with multiplicities) into sets $I_n, 1\le n \le d,$
 where
 each $I_n = \{j_{n1},j_{n2},j_{n3}\}$ is a set of at most three distinct numbers chosen so that each element $j\in \{1,\dots, M\}$
 occurs in precisely $m_j$ different sets $I_n$.   
 Then 
 $$
\sum_{i=1}^M a_im_i = \sum_{n=1}^{d}\bigl(\sum_{i\in I_n}a_i\bigr).
$$
Further $\sum_{i\in I_n}a_i\le \mu$ for all $I_n$ because $(\mu;\ba)$ is reduced.  Hence
 $$
\sum_{i=1}^M a_im_i \le \sum_{n=1}^{d} \mu = d\mu,
 $$
as required.
\end{proof}
 
\NI{\bf Proof of Lemma \ref{le:alg}.} In the argument below we assume $M\ge 3$.
Since we allow the $a_i$ to be $0$, we can always reduce to this case by increasing $M$ if necessary.
  Next observe that it suffices to prove the result for integral tuples
$(\mu;\ba)$ and  $(d;\bm)$.  We suppose throughout 
that $(\mu;\ba)$ satisfies conditions (i) and (ii).
Further, if  
 $(d;\bm)$ is such that $\|\bm\| : = \sqrt{\sum m_i^2}\le d$,
then $(d;\bm)\cdot(\mu;\ba) \ge d\mu-\|\bm\|\,\|\ba\| \ge0 $ as required. Therefore  
we only need consider $(d;\bm)$ with $(d;\bm)\cdot (d;\bm)< 0$.

 Following  Li--Li \cite{LL2}, consider the Cremona transformation $Cr$ that acts on tuples by
$
Cr(d;\bm) = (d';\bm')$, where $m_j'=m_j$ for $j\ge 4$ and 
$$
d' = 2d-(m_1+m_2+m_3), \quad m_i' = d-(m_j+m_k) \mbox{ for } \{i,j,k\}=\{1,2,3\}.
$$
Then $Cr$ preserves the class $K$ and the intersection product,
 and hence preserves  $\Ff$.   Because $Cr$, when considered as acting on 
 $H_2(X_M)$,  is induced by a diffeomorphism (the reflection in the sphere in
 class $L-E_1-E_2-E_3$), it preserves the set of classes represented by embedded spheres and hence preserves $\Ee_K$ and $\Cc_K$.

%
 

Now suppose that $(\mu;\ba)\in \Cc_K$, and 
  denote by $\Orb(\mu;\ba)$  its orbit under 
 permutations and Cremona transformations.  
   Since   
  $\Orb(\mu;\ba)\subset \Cc_K$, all elements in $\Orb(\mu;\ba)$ are positive. 
  Moreover, if $(\mu;\ba)$ is ordered,  a Cremona move decreases $\mu$ unless 
  $(\mu;\ba)$ is also reduced.  Hence
  we can transform an ordered $(\mu;\ba)$  to a reduced element 
  $(\mu';\ba'): = C(\mu;\ba)$ by a
   sequence  of $k$ moves $C_1,\dots, C_k$
   each consisting of $Cr$  followed by a reordering.
   Thus $C: = C_k\circ \dots\circ C_1$.
   Take any $(d;\bm)\in \Ff^+$ with $(d;\bm)\cdot (d;\bm) < 0$ and 
   denote by $(d';\bm'):= C (d;\bm)$ its image 
   under these moves.  Then we must check that
   $(d';\bm')\cdot (\mu';\ba') \ge 0$.
   
   There are three cases to consider.
   \MS
   
   \NI {\bf Case (i)}   {\it $(d';\bm')$ is positive. }
   
   Since $(d';\bm')\cdot (d';\bm') =(d;\bm)\cdot (d;\bm) < 0$ and $(d';\bm')\in \Ff$ we must have   $-K\cdot (d';\bm')  > 0$.      Hence the result follows from Lemma \ref{le:red}.
 
\MS

 \NI {\bf Case (ii)}   {\it $d'>0$ but some $m_i'<0$. }
 
 In this case, let $(d',\bm'')$ be the positive tuple obtained from $(d';\bm')$ by 
 replacing the negative terms $m_i'$ by $0$.  Because $m^2 + m \ge 0$ for all $m$, 
 we still have $(d';\bm'')\in \Ff^+$.  Further
 $$
  (d';\bm')\cdot (\mu';\ba') \ge 
 (d';\bm'')\cdot (\mu';\ba').
 $$
 Therefore it suffices to show that $(d';\bm'')\cdot (\mu';\ba')\ge 0$.
 If  $(d';\bm'')\cdot (d';\bm'')\ge 0$, then  this holds
 by the argument in the first paragraph of this proof.  
 Otherwise $(d';\bm'')\cdot (d';\bm'')< 0$, and  it holds
as in case (i) above. 
   \MS
   
\NI {\bf Case (iii)}   {\it $d'<0$. }

In this case, we show that $  (d';\bm')\cdot (\mu';\ba') \ge 0$ by induction on $k$, the length of the reducing sequence for $(\mu;\ba)$.
Consider the sequence $(d_\ell;\bm_\ell): = C_{\ell}\circ\dots \circ C_1(d;\bm)$ of elements of $\Ff$ obtained by applying the moves $C_i, i=1,\dots,k,$ to $(d;\bm)$, and let $(\mu_\ell;\ba_\ell)$  be the corresponding elements of $\Cc_K$.
Consider the smallest $\ell$ for which $(d_\ell;\bm_\ell) $ is negative.

Suppose first that $d_\ell < 0$.  Then the entries $s_i: = m_{\ell-1,i}$ of the previous term $\bm_{\ell-1}$ are nonnegative, while, if $t: = d_{\ell-1}$ we have 
$$
0< 2t<s_1+s_2+s_3,\quad \sum_{i=1}^3 s_i^2+s_i \le t^2 + 3t.
$$
Therefore, if $\sum_{i=1}^3  s_i^2= \la t^2$ with $\la > 1$,
we have
$$
2t < \sum_{i=1}^3  s_i \le 3t - (\la-1) t^2,
$$
so that $\la < 1 + 1/t$.  Thus in all cases
$\sum_{i=1}^3  s_i^2 \le t(1+t)$.
But  the minimum of the expression $x^2+y^2+z^2$ subject to the constraints $x,y,z\ge 0, x+y+z=2$ is assumed when $x=y=z$ and is $\frac 43$. Therefore $t(1+t) \ge \frac 43 t^2$, which gives $d_{\ell-1} = t\le 3$.  But there is no integral solution for 
$(d_{\ell-1};\bm_{\ell-1})$ with such a low value 
for $d_{\ell-1}$.  Thus this case does not occur.

Hence the first negative element $(d_\ell;\bm_\ell)$ must have $d_\ell>0$ and some negative entry in $\bm_\ell$.  
But then  define $\bm_\ell''$ as in Case (ii) by replacing all 
negative entries in $\bm_\ell$ by $0$.  We saw there that it suffices to show that 
 $(d_\ell;\bm_\ell'')\cdot (\mu_\ell;\ba_\ell) \ge 0$. 
 If  $(d_\ell;\bm_\ell'')\cdot(d_\ell;\bm_\ell'')\ge 0$ this is automatically true.  
Otherwise, since $\ell\ge 1$, it holds by the inductive hypothesis.

This completes the proof of Lemma \ref{le:alg}.\QED

We are now ready to prove the main result.
We denote by $\la E(a,b)$ the ellipsoid $\{\la x:x\in E(a,b)\}$.  Thus $\la E(a,b) = E(\la^2 a, \la^2 b)$ has  corresponding sequence $\la^2\Nn(a,b)$.
\MS

\NI {\bf Proof of Theorem \ref{thm:hof}.} 
  By standard continuity properties as explained in
\cite[Cor.~1,5]{M}, 
it suffices to prove this when $a,b,c,d$ are rational.
Therefore we will suppose that $c\le d$ are mutually prime integers
and that $(a,b) = \la^2 (e,f)$ where $e\le f$ are also mutually prime integers. 
We need to show that there is
 an embedding 
$\Phi_E: \intt \la E(e,f)\se E(c,d)$ exactly if $\la^2\Nn(e,f)\preccurlyeq \Nn(c,d)$.

By Proposition \ref{prop:M} 
it suffices to consider the corresponding ball embedding $\Phi_B$, and by 
 Proposition \ref{prop:H}, this  exists exactly if 
$$
\Nn\Bigl( \la B\bigl(\bW(e,f)\bigr) \sqcup  B\bigl(\bW(d,d-c)\bigr)\Bigr)\;\preccurlyeq\; \Nn(B(d)) = \Nn(d,d).
$$
Since $\Nn\bigl( \la B\bigl(\bW(e,f)\bigr) \sqcup  B\bigl(\bW(d,d-c)\bigr)\bigr) =
\la^2 \Nn(e,f)\# \Nn(d,d-c)$  this condition is equivalent to
$$
(*)\qquad\la^2 \Nn(e,f)\# \Nn(d,d-c) \;\preccurlyeq\; \Nn(d,d).
$$
Thus the theorem will hold if we show that (\textasteriskcentered) is equivalent to the condition
$\la^2 \Nn(e,f)\preccurlyeq \Nn(c,d)$.

But if $\la^2 \Nn(e,f)\preccurlyeq \Nn(c,d)$ then  we have
$$
\la^2 \Nn(e,f)\# \Nn(d,d-c) \preccurlyeq
\Nn(c,d)\# \Nn(d,d-c)  \preccurlyeq \Nn(d,d)
$$
by Lemma \ref{le:assoc} (ii) and Lemma \ref{le:hof2} (i).
Conversely suppose that (\textasteriskcentered) holds. 
Then $\la^2 \Nn(e,f) \preccurlyeq\Nn(c,d)$ by Corollary \ref{cor:2}.
    Hence the two conditions are equivalent.
\QED

\begin{prop}\labell{prop:many}  The disjoint union of open ellipsoids 
$\sqcup_{i=1}^nE(a_i,b_i)$ embeds symplectically in $E(c,d)$ exactly if
$$
\Nn(a_1,b_1)\#\dots\#\Nn(a_n,b_n)\preccurlyeq \Nn(c,d).
$$ 
\end{prop}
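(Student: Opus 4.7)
The argument parallels that of Theorem \ref{thm:hof}, with the single source ellipsoid replaced by the disjoint union $\sqcup_i E(a_i,b_i)$.

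By the continuity properties of \cite[Cor.~1.5]{M}, it suffices to work with rational $a_i,b_i,c,d$; after rescaling we may assume that $c\le d$ are coprime integers and that $(a_i,b_i)=\la_i^2(e_i,f_i)$ where $e_i\le f_i$ are coprime integers. The first step is to invoke the disjoint-union analog of Proposition \ref{prop:M}: there is a symplectic embedding $\sqcup_i \intt \la_i E(e_i,f_i)\se E(c,d)$ precisely when the ball embedding
$$
\bigl(\sqcup_i\, \la_i\, \intt B(\bW(e_i,f_i))\bigr) \;\sqcup\; \intt B(\bW(d-c,d)) \;\se\; B(d)
$$
exists. This extends Proposition \ref{prop:M} by the same techniques as in \cite[Thm.~3.11]{M}: each source ellipsoid decomposes into its weight-sequence balls (cf.~Figure \ref{fig:4}), and the Taubes--Seiberg--Witten analysis applies with several source balls in place of one.

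Next, apply Proposition \ref{prop:H} together with Corollary \ref{cor:1} to this ball embedding. Using that $\la_i^2\Nn(e_i,f_i) = \Nn(\la_i^2 e_i,\la_i^2 f_i) = \Nn(a_i,b_i)$, we conclude that the ball embedding exists if and only if
$$
(\ast)\qquad \Nn(a_1,b_1) \# \cdots \# \Nn(a_n,b_n) \# \Nn(d-c,d) \;\preccurlyeq\; \Nn(d,d).
$$
Setting $\Cc := \Nn(a_1,b_1) \# \cdots \# \Nn(a_n,b_n)$, it remains to show that $(\ast)$ is equivalent to $\Cc \preccurlyeq \Nn(c,d)$. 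The ``if'' direction follows by combining Lemma \ref{le:assoc}(ii) with Lemma \ref{le:hof2}(i): if $\Cc\preccurlyeq \Nn(c,d)$, then $\Cc\#\Nn(d-c,d)\preccurlyeq \Nn(c,d)\#\Nn(d-c,d)\preccurlyeq \Nn(d,d)$. The converse is the direct application of Corollary \ref{cor:2} to $\Cc$. Both steps are verbatim adaptations of the corresponding ones in the proof of Theorem \ref{thm:hof}.

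The only genuinely new ingredient is the disjoint-union extension of Proposition \ref{prop:M}, which is the main obstacle but is a routine adaptation of the single-source argument in \cite{M}: the symplectic inflation procedure and the invocation of Taubes--Seiberg--Witten theory both work equally well with any finite collection of disjoint source balls. Once this geometric reduction is in hand, the combinatorial content of the proof is an immediate extension of what was already developed in Section 2 for Theorem \ref{thm:hof}.
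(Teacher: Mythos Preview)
Your proof is correct and follows essentially the same route as the paper's own argument: extend Proposition~\ref{prop:M} to a disjoint union of source ellipsoids, translate via Proposition~\ref{prop:H} and Corollary~\ref{cor:1} into the sequence condition $(\ast)$, and then use Lemma~\ref{le:assoc}(ii), Lemma~\ref{le:hof2}(i), and Corollary~\ref{cor:2} exactly as in the proof of Theorem~\ref{thm:hof}. The paper's proof is terser but structurally identical.
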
  
\begin{proof} Though this case is not considered in \cite{M}, the proof of Proposition \ref{prop:M}  works just as well when the domain is a disjoint union of ellipsoids.  Hence, if $c\le d$
the necessary and sufficient condition for this embedding of unions of ellipsoids to exist is that
$$
\Nn\bigl(\sqcup_i \bW(a_i,b_i)\bigr)\# \Nn\bigl(\bW(d-c,d)\bigr)
\preccurlyeq  \Nn(d,d).
$$
The proof of Corollary \ref{cor:1} adapts to show that 
$$
\Nn\bigl(\sqcup_i \bW(a_i,b_i)\bigr) = 
\Nn(a_1,b_1)\#\dots\#\Nn(a_n,b_n) = :\Cc.
$$
Now use Lemma ~\ref{le:hof2} and Corollary \ref{cor:2} as before. 
\end{proof}

\end{document}